\newtheorem{theorem}{Theorem}
\newtheorem{lemma}[theorem]{Lemma}
\newtheorem{ex}[theorem]{Example}
\newtheorem{conjecture}[theorem]{Conjecture}
\theoremstyle{remark}
\numberwithin{equation}{section}
\begin{document}
\baselineskip=15pt

\title{Fischer type determinantal inequalities for accretive-dissipative matrices}

\author{Minghua Lin} 

\date{}

\maketitle

\begin{abstract}
\noindent   Let $A=\begin{bmatrix}
A_{11} &A_{12}   \\A_{21}  & A_{22}
\end{bmatrix}$ be an $n\times n$ accretive-dissipative matrix, $k$ and $l$ be the orders of $A_{11}$ and $A_{22}$, respectively, and let $m=\min\{k,l\}$.  Then $$|\det A|\le a|\det A_{11}|\cdot|\det A_{22}|,$$ where $a=\left\{\begin{array}{l l}
    2^{3m/2}, &   \text{if}~~ m\le n/3;\\
    2^{n/2}, &  \text{if}~~ n/3<m\le n/2.\\
  \end{array} \right.$ This improves a result of Ikramov.
\end{abstract}

{\small\noindent
Keywords:  Accretive-dissipative matrix, Fischer determinantal inequality.

\noindent
AMS subjects classification 2010:  15A45.}

 \section{Introduction}
\noindent
     Let $\mathbb{M}_{n}(\bold C)$ be the set of $n\times n$ complex matrices. For any $A\in\mathbb{M}_n(\bold C)$, $A^*$ stands for the conjugate transpose of $A$.   $A\in\mathbb{M}_{n}(\bold C)$ is accretive-dissipative if it can be written as  \begin{eqnarray} \label{e0} A=B+iC, \end{eqnarray} where $B=\frac{A+A^*}{2}$ and $C=\frac{A-A^*}{2i}$ are both (Hermitian) positive definite. Conformally partition $A, B, C$ as  \begin{eqnarray} \label{e1} \begin{bmatrix}
A_{11} &A_{12}   \\A_{21}  & A_{22}
\end{bmatrix}= \begin{bmatrix}
B_{11} &B_{12}   \\B_{12}^*  & B_{22}
\end{bmatrix} +i \begin{bmatrix}
C_{11} &C_{12}   \\C_{12}^*  & C_{22}
\end{bmatrix}  \end{eqnarray}
such that all diagonal blocks are square. Say $k$ and $l$ ($k,l>0$ and $k+l=n$) the order of $A_{11}$ and $A_{22}$, respectively, and let $m=\min\{k,l\}$. 

If $A$ is positive definite and partitioned as in (\ref{e1}), then the famous Fischer determinantal inequality {(FDI)} \cite[p. 478]{HJ85} states that
\begin{eqnarray} \label{fischer det}  \det A\le \det A_{11}\cdot\det A_{22}.\end{eqnarray}

Determinantal inequalities for accretive-dissipative matrices were first investigated by Ikramov \cite{Ikr04}, who obtained:
\begin{theorem}\label{th1} Let $A\in \mathbb{M}_{n}(\bold C)$ be accretive-dissipative and partitioned as in (\ref{e1}). Then  \begin{eqnarray}\label{ikramov-det}|\det A|\le 3^m|\det A_{11}|\cdot|\det A_{22}|.
\end{eqnarray}\end{theorem}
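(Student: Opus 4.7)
My plan is to reduce the theorem to Fischer's inequality for the positive definite Hermitian matrix $B+C$, together with elementary eigenvalue estimates that pass between $|\det A|$ and $\det(B+C)$, and between $\det(B_{ii}+C_{ii})$ and $|\det A_{ii}|$.

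The elementary inequality at the heart of the argument is
\[
\sqrt{1+t^{2}}\;\le\;1+t\;\le\;\sqrt{2(1+t^{2})}\qquad (t\ge 0).
\]
Writing $A=B^{1/2}(I+iK)B^{1/2}$ with $K:=B^{-1/2}CB^{-1/2}>0$, and letting $\mu_{1},\ldots,\mu_{n}$ be the eigenvalues of $K$, I obtain the identities $|\det A|=\det B\prod_{j}\sqrt{1+\mu_{j}^{2}}$ and $\det(B+C)=\det B\prod_{j}(1+\mu_{j})$. The scalar inequality then yields $|\det A|\le\det(B+C)$ and, applied in reverse to each accretive-dissipative diagonal block $A_{ii}=B_{ii}+iC_{ii}$ of order $n_{i}$, the estimate $\det(B_{ii}+C_{ii})\le 2^{n_{i}/2}|\det A_{ii}|$. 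Combining this with Fischer's inequality $\det(B+C)\le\det(B_{11}+C_{11})\det(B_{22}+C_{22})$ gives
\[
|\det A|\;\le\;2^{(k+l)/2}|\det A_{11}|\cdot|\det A_{22}|\;=\;2^{n/2}|\det A_{11}|\cdot|\det A_{22}|,
\]
which already settles the theorem in the regime $n/3<m\le n/2$.

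For the sharper constant $a=2^{3m/2}$ when $m\le n/3$, the crude argument above wastes a factor $2^{l/2}$ on the larger block (WLOG $k=m\le l$, so $l\ge 2m$). The plan here is to refine the partition by splitting $A_{22}$ further into a leading $k\times k$ principal sub-block and an $(l-k)\times(l-k)$ residual. On the residual, Fischer's inequality is applied to $B$ and $C$ individually, which is lossless since both are positive definite; the $\sqrt{2}$-loss of the reverse estimate is then collected only on the three $k\times k$ pieces that couple into $A_{11}$, producing the target $2^{k/2}\cdot 2^{k/2}\cdot 2^{k/2}=2^{3k/2}=2^{3m/2}$.

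The main obstacle is executing this refinement cleanly: one must reconcile the block structure of $B$ and $C$ with the Schur-complement expansion of $A$ so that the three $2^{k/2}$ factors emerge transparently, with no hidden overhead absorbed into the residual $(l-k)\times(l-k)$ portion of $A_{22}$. The hypothesis $m\le n/3$, equivalent to $l-k\ge k$, is exactly what guarantees sufficient room to carry out the refinement without paying any additional loss on the residual block.
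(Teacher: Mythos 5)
Your first step is correct and complete: the factorization $A=B^{1/2}(I+iK)B^{1/2}$, the scalar inequality $\sqrt{1+t^2}\le 1+t\le\sqrt{2(1+t^2)}$, and Fischer's inequality for $B+C$ together give $|\det A|\le 2^{n/2}|\det A_{11}|\cdot|\det A_{22}|$. This is exactly the paper's Lemma \ref{lem4} and Theorem \ref{th3}, and since $2^{n/2}\le 3^m$ whenever $m\ge n\ln 2/(2\ln 3)\approx 0.316\,n$, it settles the claimed bound $3^m$ in that range (in particular for $n/3<m\le n/2$). Note, though, that the statement you were asked to prove is Ikramov's bound $3^m$, which must hold for \emph{all} $m\le n/2$; your first step does not cover small $m$, so everything hinges on your second step.

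That second step has a genuine gap. You never execute the refinement, and the mechanism you describe does not work as stated: ``applying Fischer to $B$ and $C$ individually on the residual'' gives bounds on $\det B$ and $\det C$ separately, but $|\det(B+iC)|$ is not a function of $\det B$ and $\det C$, so there is no way to recombine those estimates into a bound on $|\det A|$ or on $|\det A_{22}|$. Likewise, the loss in passing from $\det(B_{22}+C_{22})$ back to $|\det A_{22}|$ is genuinely $2^{l/2}$ in the worst case (take $C_{22}=B_{22}$), so it cannot be ``localized'' to a $k\times k$ corner of the big block by choosing a finer partition; a different idea is required. The route the paper takes (for its Theorem \ref{th4}, which is what implies the $3^m$ bound for small $m$) is to pass to the Schur complement $A/A_{11}=R+iS$, which is an $m\times m$ accretive-dissipative matrix, write $A_{11}^{-1}=E_k-iF_k$ via Lemma \ref{lem2}, use $B_{11}+C_{11}B_{11}^{-1}C_{11}\ge 2C_{11}$ to get $E_k\le\frac12 C_{11}^{-1}$ and $F_k\le\frac12 B_{11}^{-1}$, and deduce the operator bound $R+S\le 2(B_{22}+C_{22})$; then $|\det(A/A_{11})|\le\det(R+S)\le 2^m\det(B_{22}+C_{22})\le 2^{3m/2}|\det A_{22}|$. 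Here the factor $2^m$ comes from the scalar $2$ in front of an $m\times m$ matrix, not from a $\sqrt2$-per-dimension loss, and no hypothesis $m\le n/3$ is needed. None of this apparatus (Schur complement, the inverse formula, the key inequality $B+CB^{-1}C\ge 2C$) appears in your sketch, and your stated ``main obstacle'' is precisely the part of the proof that is missing.
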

A reverse direction to that of Theorem \ref{th1} has been given in \cite{Lin12b}. We call this kind of inequalities the Fischer type determinantal inequality for accretive-dissipative matrices. In this paper, we intend to give an improvement of (\ref{ikramov-det}). Our main result can be stated as \begin{theorem}\label{th2} Let $A\in \mathbb{M}_{n}(\bold C)$ be accretive-dissipative and partitioned as in (\ref{e1}). Then  \begin{eqnarray}\label{lin-det} |\det A|\le a|\det A_{11}|\cdot|\det A_{22}|,\end{eqnarray} where $a=\left\{\begin{array}{l l}
    2^{3m/2}, &   \text{if}~~ m\le n/3;\\
    2^{n/2}, &  \text{if}~~ n/3<m\le n/2.\\
  \end{array} \right.$
\end{theorem}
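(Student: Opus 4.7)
Without loss of generality assume $k \le l$, so $m = k$. Write $A = B + iC$ with $B, C$ positive definite, and use the factorization
\[
A = B^{1/2}(I + iT)B^{1/2}, \qquad T := B^{-1/2}CB^{-1/2} > 0.
\]
Letting $\lambda_1,\dots,\lambda_n$ denote the eigenvalues of $T$, this gives $|\det A|^2 = (\det B)^2 \prod_{j=1}^n (1+\lambda_j^2)$, and an analogous formula for each diagonal block $A_{jj}$.

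The two elementary pointwise inequalities $1 + \lambda^2 \le (1+\lambda)^2$ and $(1+\lambda)^2 \le 2(1+\lambda^2)$ translate into the determinantal bounds
\[
|\det A| \le \det(B+C) \qquad \text{and} \qquad \det(B_{jj}+C_{jj}) \le 2^{k_j/2}\,|\det A_{jj}|,
\]
where $k_j$ is the size of the $j$-th block ($k_1=k$, $k_2=l$). Combined with Fischer's inequality applied to the positive definite matrix $B+C$, namely $\det(B+C) \le \det(B_{11}+C_{11})\det(B_{22}+C_{22})$, these yield
\[
|\det A| \;\le\; 2^{k/2}\cdot 2^{l/2}\,|\det A_{11}|\cdot|\det A_{22}| \;=\; 2^{n/2}|\det A_{11}|\cdot|\det A_{22}|,
\]
which proves the theorem in the regime $n/3 < m \le n/2$.

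For the sharper constant $2^{3m/2}$ in the regime $m \le n/3$, the above argument is wasteful because it pays $2^{l/2}$ on the larger block. My plan is to bypass Fischer on $B+C$ by using the Schur-complement identity
\[
\det A \;=\; \det A_{11}\,\cdot\,\det A_{22}\,\cdot\,\det(I_m - W), \qquad W := A_{11}^{-1}A_{12}A_{22}^{-1}A_{21} \in \mathbb{M}_m(\mathbb{C}),
\]
which reduces the problem to proving $|\det(I_m - W)| \le 2^{3m/2}$. Since $W$ is only $m\times m$, the estimate is intrinsically $m$-dimensional and has no dependence on $l$. To bound the singular values of $W$, I would exploit the positive definiteness of $B$ and $C$, which yields the contraction bounds $\|B_{11}^{-1/2}B_{12}B_{22}^{-1/2}\| < 1$ and $\|C_{11}^{-1/2}C_{12}C_{22}^{-1/2}\| < 1$, and combine them with the normal-form factorizations of the blocks $A_{11}$ and $A_{22}$ to extract three ``contraction-like'' factors from $W$, each paying at most $\sqrt{2}$ per dimension, for a total of $(\sqrt{2})^{3m}$.

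The main obstacle, which I expect to require the most care, is making the singular-value estimate deliver exactly the constant $2^{3m/2}$. The nonlinear mixing of $B_{12}$ and $C_{12}$ through the inverses $A_{11}^{-1}$ and $A_{22}^{-1}$ prevents a clean separation of real and imaginary parts, so a careful invocation of Sylvester's determinant identity together with an AM--GM-style step will likely be needed. Without such care one easily ends up with weaker constants such as $2^{2m}$ or Ikramov's $3^m$; extracting the sharper $2^{3m/2}$ is where the real work lies.
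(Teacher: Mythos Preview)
Your argument for the bound $2^{n/2}$ is correct and coincides with the paper's proof of that case: bound $|\det A|\le\det(B+C)$, apply Fischer's inequality to the positive definite matrix $B+C$, and pay $2^{k/2}\cdot 2^{l/2}=2^{n/2}$ when converting each factor $\det(B_{jj}+C_{jj})$ back to $|\det A_{jj}|$.

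For the $2^{3m/2}$ bound, however, your proposal has a genuine gap. The reduction is right---your $\det(I_m-W)$ is exactly $\det(A/A_{jj})/\det A_{ii}$ once Sylvester is applied, so the problem is intrinsically $m$-dimensional---but the plan to ``extract three contraction-like factors from $W$, each paying at most $\sqrt2$'' via singular-value bounds is not backed by any concrete inequality, and I do not see how to make it deliver $2^{3m/2}$. Bounding $|\det(I_m-W)|\le\prod_j(1+\sigma_j(W))$ and controlling $\sigma_j(W)$ through the contractions $\|B_{11}^{-1/2}B_{12}B_{22}^{-1/2}\|<1$, $\|C_{11}^{-1/2}C_{12}C_{22}^{-1/2}\|<1$ runs into exactly the mixing problem you describe, and you yourself acknowledge that the argument is incomplete.

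The paper avoids singular values entirely and works in the Loewner order. Taking $m=l$, it writes the Schur complement $A/A_{11}=R+iS$ (again accretive-dissipative, so $R,S>0$) and uses the explicit inverse $A_{11}^{-1}=E-iF$ with $E=(B_{11}+C_{11}B_{11}^{-1}C_{11})^{-1}$, $F=(C_{11}+B_{11}C_{11}^{-1}B_{11})^{-1}$. The elementary operator inequality $X+YX^{-1}Y\ge 2Y$ gives $E\le\tfrac12 C_{11}^{-1}$ and $F\le\tfrac12 B_{11}^{-1}$; an AM--GM step $\pm(U^*GV+V^*GU)\le U^*GU+V^*GV$ on the cross terms, together with $B_{12}^*B_{11}^{-1}B_{12}<B_{22}$ and $C_{12}^*C_{11}^{-1}C_{12}<C_{22}$, then yields the clean bound
\[
R+S\;\le\;2\,(B_{22}+C_{22}).
\]
Hence $|\det(A/A_{11})|\le\det(R+S)\le 2^{m}\det(B_{22}+C_{22})\le 2^{m}\cdot 2^{m/2}|\det A_{22}|$, and $\det A=\det A_{11}\cdot\det(A/A_{11})$ finishes. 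So the missing ingredient in your plan is precisely this operator-inequality control of $R+S$; it replaces your hoped-for singular-value estimate and is what produces the constant $2^{3m/2}$.
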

As $a<3^m$, it is clear that Theorem \ref{th2} improves Theorem \ref{th1}. The proof of Theorem \ref{th2} is given in Section 3.
 \section{Auxiliary results}
\noindent
In this section, we present some lemmas that are needed in the proof of our main result.

\begin{lemma}\cite[Property 6]{GI05} \label{lem1} Let $A\in \mathbb{M}_{n}(\bold C)$ be accretive-dissipative and partitioned as in (\ref{e1}). Then $A/A_{11}:=A_{22}-A_{21}A_{11}^{-1}A_{12}$, the Schur complement of $A_{11}$ in $A$, is also accretive-dissipative. \end{lemma}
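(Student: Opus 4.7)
The strategy is to reduce the required positive-definiteness of the Hermitian and imaginary parts of $A/A_{11}$ to that of $B$ and $C$ via a Schur-complement quadratic-form substitution, mirroring Haynsworth's classical argument for Fischer's inequality in the positive-definite case.

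First I would observe that $A_{11}$ is invertible. Its Hermitian and imaginary parts coincide with the principal submatrices $B_{11}$ and $C_{11}$ of the positive-definite matrices $B=\frac{A+A^*}{2}$ and $C=\frac{A-A^*}{2i}$, so they are themselves positive definite; hence the numerical range of $A_{11}$ stays in the open first quadrant and $A_{11}$ is invertible. Therefore $S:=A/A_{11}=A_{22}-A_{21}A_{11}^{-1}A_{12}$ is well-defined, and the task reduces to proving $\re(y^*Sy)>0$ and $\im(y^*Sy)>0$ for every nonzero $y\in\mathbb{C}^l$, since these inequalities are equivalent to the Hermitian and imaginary parts of $S$ being positive definite.

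For such a $y$, I would introduce the auxiliary vector $x=\begin{bmatrix}-A_{11}^{-1}A_{12}y\\ y\end{bmatrix}\in\mathbb{C}^n$, which is nonzero because its lower block equals $y$. A direct block multiplication shows $Ax=\begin{bmatrix}0\\ Sy\end{bmatrix}$, so $x^*Ax=y^*Sy$. Writing $A=B+iC$ with $B,C$ Hermitian, the scalars $x^*Bx$ and $x^*Cx$ are real, hence $\re(y^*Sy)=x^*Bx$ and $\im(y^*Sy)=x^*Cx$; the positive definiteness of $B$ and $C$ together with $x\neq 0$ then forces both right-hand sides to be strictly positive, which gives the claim.

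I do not expect a serious obstacle. The whole argument hinges on the standard observation that the Schur complement appears as the restriction of the quadratic form $v\mapsto v^*Av$ to the affine slice parametrized by the second block. The only feature peculiar to the accretive-dissipative setting is that positivity must be tested on $B$ and on $C$ separately, and this is handled cleanly by the decomposition $A=B+iC$; the mild technical check is just the invertibility of $A_{11}$, which was dispatched in the opening step.
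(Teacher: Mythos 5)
Your proof is correct. The paper itself gives no argument for this lemma --- it is quoted verbatim from George--Ikramov \cite[Property 6]{GI05} --- so there is nothing internal to compare against, but your reasoning is complete and self-contained. The two pillars both check out: (i) $A_{11}=B_{11}+iC_{11}$ has positive definite Hermitian part, so $0$ is not in its numerical range and it is invertible, making the Schur complement well defined; (ii) with $x=\begin{bmatrix}-A_{11}^{-1}A_{12}y\\ y\end{bmatrix}$ one indeed gets $Ax=\begin{bmatrix}0\\ Sy\end{bmatrix}$ and hence $y^*Sy=x^*Ax=x^*Bx+i\,x^*Cx$, so $\re(y^*Sy)=x^*Bx>0$ and $\operatorname{Im}(y^*Sy)=x^*Cx>0$ for $y\ne 0$, which is exactly the positive definiteness of the Hermitian and skew parts of $A/A_{11}$. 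This quadratic-form (Haynsworth-style) argument is arguably cleaner than computing $R$ and $S$ explicitly via Lemma \ref{lem2}, which is what the paper does later in the proof of Theorem \ref{th4} when it actually needs the formulas; your route proves accretive-dissipativity of the Schur complement without any inverse formula for $A_{11}$.
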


\begin{lemma}\cite[Lemma 1]{Ikr04}  \label{lem2}  Let $A\in \mathbb{M}_{n}(\bold C)$ be accretive-dissipative as in (\ref{e0}). Then $A^{-1}=E-iF$ with $E=(B+CB^{-1}C)^{-1}$ and $F=(C+BC^{-1}B)^{-1}$. \end{lemma}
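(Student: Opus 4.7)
The plan is to derive the two constants $2^{n/2}$ and $2^{3m/2}$ from two separate arguments, a global one and an iterative one, and then invoke whichever is smaller.

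\emph{Global bound.} Parameterize $A = B^{1/2}(I+iS)B^{1/2}$ with $S := B^{-1/2}CB^{-1/2}$, so that $|\det A|^2 = \det(B)^2\det(I+S^2)$. The elementary inequalities $1+\sigma^2 \le (1+\sigma)^2$ and $(1+\sigma)^2 \le 2(1+\sigma^2)$ (for $\sigma\ge 0$, applied to the positive eigenvalues of $S$) translate into the two-sided comparison $2^{-n/2}\det(B+C) \le |\det A| \le \det(B+C)$. Applying the upper bound to $A$, the lower bound to $A_{11}$ (size $k$) and to $A_{22}$ (size $l$), and sandwiching with Fischer's inequality (\ref{fischer det}) on the positive definite matrix $B+C$, one obtains
\begin{equation*}
|\det A| \le \det(B+C) \le \det(B_{11}+C_{11})\det(B_{22}+C_{22}) \le 2^{n/2}|\det A_{11}|\,|\det A_{22}|,
\end{equation*}
which establishes the theorem in the regime $m > n/3$ and provides a universal ceiling otherwise.

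\emph{Iterative bound.} Assume without loss of generality $m = k \le l$. By Lemma \ref{lem1}, the Schur complement $A/A_{11}$ is accretive-dissipative, so $\det A = \det A_{11}\cdot\det(A/A_{11})$ reduces the task to showing $|\det(A/A_{11})| \le 2^{3m/2}|\det A_{22}|$. Substituting the explicit inversion formula $A_{11}^{-1} = E_1 - iF_1$ from Lemma \ref{lem2} into $A_{22} - A/A_{11} = A_{21}A_{11}^{-1}A_{12}$ yields explicit expressions for the Hermitian and skew-Hermitian parts of $A/A_{11}$ in terms of $B_{22}, C_{22}, B_{12}, C_{12}, E_1, F_1$. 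Peeling off one column of $A_{11}$ at a time, and sharpening the triangle-inequality step in Ikramov's argument (which produces a factor of $3$ per column) by a Cauchy--Schwarz-type estimate on the cross terms (giving $2^{3/2}$ per column), yields the desired bound after $m$ iterations.

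\emph{Main obstacle.} The crux is the sharp per-column constant $2^{3/2}$. Naive operator comparisons between the blocks $(CB^{-1}C)_{11}$ and $C_{11}B_{11}^{-1}C_{11}$ fail, since the ratio of their determinants is unbounded in general (as one sees by letting the off-diagonal entries of $B$ approach the diagonal ones). Hence the improvement from Ikramov's $3$ to $2^{3/2}$ per column must come from bounding the relevant determinants directly, carefully balancing the Hermitian versus skew-Hermitian contributions from the $E_1$- and $F_1$-terms in Lemma \ref{lem2}, and exploiting the joint positivity of $B$ and $C$ through the Schur-complement positivity supplied by Lemma \ref{lem1}. The case split at $m = n/3$ is then purely arithmetic: $2^{3m/2} \le 2^{n/2}$ precisely when $3m \le n$.
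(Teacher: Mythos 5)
Your proposal does not address the statement you were asked to prove. The statement is Lemma \ref{lem2}, the explicit inversion formula $A^{-1}=E-iF$ with $E=(B+CB^{-1}C)^{-1}$ and $F=(C+BC^{-1}B)^{-1}$; what you have written is a proof plan for the main result, Theorem \ref{th2}. Nowhere in your text is the identity $A^{-1}=E-iF$ derived --- on the contrary, you invoke Lemma \ref{lem2} as a known ingredient in your ``iterative bound''. So, as a proof of Lemma \ref{lem2}, the proposal is entirely missing; this is not a different route to the same goal but a proof of a different statement.

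For the record, the lemma is a short direct verification (the paper itself only cites Ikramov and gives no proof). Since $B,C>0$, both $E^{-1}=B+CB^{-1}C$ and $F^{-1}=C+BC^{-1}B$ are positive definite, hence invertible. Writing $X=CB^{-1}$, one checks $BE=\bigl((B+CB^{-1}C)B^{-1}\bigr)^{-1}=(I+X^{2})^{-1}$ and $CF=\bigl((C+BC^{-1}B)C^{-1}\bigr)^{-1}=(I+X^{-2})^{-1}=(I+X^{2})^{-1}X^{2}$, so $BE+CF=I$; moreover $E^{-1}C^{-1}=BC^{-1}+CB^{-1}=F^{-1}B^{-1}$, so $CE=BF$. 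Therefore $(B+iC)(E-iF)=(BE+CF)+i(CE-BF)=I$, which is the claim. Incidentally, even read as an attempt at Theorem \ref{th2}, your sketch is incomplete: the column-by-column ``peeling'' with a per-column constant $2^{3/2}$ is asserted rather than proved (your own ``main obstacle'' paragraph concedes the key estimate is open), whereas the paper obtains $2^{3m/2}$ in a single Schur-complement step using Lemmas \ref{lem1}--\ref{lem4}.
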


\begin{lemma}\cite[Lemma 3.2]{Zhan96}  \label{lem3}  Let $B, C\in \mathbb{M}_{n}(\bold C)$ be Hermitian and assume $B$ is positive definite. Then  \begin{eqnarray}\label{e2} B+CB^{-1}C\ge 2C.
\end{eqnarray} \end{lemma}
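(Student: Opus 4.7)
The plan is to reduce via the Schur complement identity $|\det A|=|\det A_{11}|\cdot|\det(A/A_{11})|$, where by Lemma~\ref{lem1} the Schur complement $A/A_{11}$ is again accretive-dissipative (of order $l$). Writing $A/A_{11}=G+iH$ with $G,H$ positive definite recasts the theorem as a comparison of $|\det(G+iH)|$ with $|\det(B_{22}+iC_{22})|$; I assume without loss of generality that $k\le l$, so $m=k$. The central tool throughout is the identity
\[
|\det(B+iC)|^{2}=\det B\cdot \det(B+CB^{-1}C),
\]
which follows from the factorization $B+iC=B^{1/2}(I+iK)B^{1/2}$ with $K=B^{-1/2}CB^{-1/2}$. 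Combined with the elementary scalar inequality $1+\lambda^{2}\le (1+\lambda)^{2}$ applied to the eigenvalues of $K$, it yields the uniform upper bound $|\det A|\le \det(B+C)$.

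For the regime $n/3<m\le n/2$, Fischer's inequality (\ref{fischer det}) applied to the positive definite matrix $B+C$ gives
\[
\det(B+C)\le \det(B_{11}+C_{11})\det(B_{22}+C_{22}),
\]
and I would then prove the scalar-to-matrix comparison
\[
\det(X+Y)\le 2^{r/2}\,|\det(X+iY)|,\qquad X,Y\in\bM_{r}(\bold C)\ \text{positive definite}.
\]
Setting $\zeta_i>0$ to be the eigenvalues of $X^{-1/2}YX^{-1/2}$, one has $(1+\zeta_i)^{2}/(1+\zeta_i^{2})\le 2$ since $(1+\zeta)^{2}\le 2(1+\zeta^{2})$, so the product of the ratios is at most $2^{r}$. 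Applied to $A_{11}$ and $A_{22}$ this gives $\det(B_{ii}+C_{ii})\le 2^{r_{i}/2}|\det A_{ii}|$ with $r_{1}=k,\,r_{2}=l$, hence $|\det A|\le 2^{n/2}|\det A_{11}||\det A_{22}|$, matching the constant in this regime.

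For the regime $m\le n/3$, the constant $2^{n/2}$ must be improved to $2^{3m/2}$, and here the proof must exploit the inequality $l\ge 2m$. The key observation is that the off-diagonal blocks $A_{12}, A_{21}$ each have rank at most $m$, so the perturbation $W:=A_{21}A_{11}^{-1}A_{12}$ that appears in the Schur complement is a low-rank correction. My plan is to invoke Lemma~\ref{lem2} on $A$ itself, writing $(A/A_{11})^{-1}=(A^{-1})_{22}=E_{22}-iF_{22}$, apply the lower bound $|\det(E_{22}-iF_{22})|\ge 2^{l/2}(\det E_{22}\det F_{22})^{1/2}$ coming from Lemma~\ref{lem3} applied to the accretive-dissipative $E_{22}+iF_{22}$, and pair it with the analogous lower bound $|\det A_{22}|\ge 2^{l/2}(\det B_{22}\det C_{22})^{1/2}$ so that the $l$-dependent factor cancels out of the ratio. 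The remaining slack, which should be of size $m$ rather than $l$, is to be absorbed into the $2^{3m/2}$ factor using Fischer on $B$ and $C$ together with Lemma~\ref{lem3} applied to $A_{11}$. The main obstacle, and the delicate part of the argument, is to make the rank-$m$ portion sharp: the rank alone gives only a count of nontrivial eigenvalues, and to reach precisely $2^{3m/2}$ one must control their magnitudes using the accretive-dissipative structure of $A_{11}$, balancing the Fischer estimates for $\det B,\det C$ against the two-sided $2^{k/2}$ bounds on $|\det A_{11}|$ coming from Lemma~\ref{lem3}.
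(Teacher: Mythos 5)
Your proposal does not prove the statement it is attached to. The statement is Lemma \ref{lem3}: for Hermitian $B,C$ with $B$ positive definite, $B+CB^{-1}C\ge 2C$. What you have written is a strategy for the main determinantal bound (Theorem \ref{th2}) --- Schur complements, Fischer's inequality, the two regimes for $m$ --- and it even invokes Lemma \ref{lem3} as a known ingredient along the way (and, incidentally, attributes to it a determinantal lower bound of the form $|\det(E-iF)|\ge 2^{l/2}(\det E\det F)^{1/2}$ that the lemma does not assert). As a proof of the lemma itself, everything is missing.

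The lemma has a one-line proof (which the paper outsources to the reference \cite{Zhan96}): conjugating by the positive definite matrix $B^{-1/2}$ preserves the semidefinite order, and with $X=B^{-1/2}CB^{-1/2}$ (Hermitian) one has
\[
B^{-1/2}\left(B+CB^{-1}C-2C\right)B^{-1/2}=I+X^{2}-2X=(I-X)^{2}\ge 0,
\]
hence $B+CB^{-1}C-2C\ge 0$. Equivalently, $B+CB^{-1}C-2C=\left(B^{1/2}-B^{-1/2}C\right)^{*}\left(B^{1/2}-B^{-1/2}C\right)\ge 0$. Note that no positivity of $C$ is needed, only that it is Hermitian; this is essentially the operator form of $1+x^{2}\ge 2x$.
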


Here we adopt the convention that, for two Hermitian matrices $X, Y$ of the same size,  $X>(\ge) Y$  means $X-Y$ is positive (semi)definite. Of course, we do not distinguish $Y<(\le) X$ from $X>(\ge) Y$.

\begin{lemma}   \label{lem4}  Let $B, C\in \mathbb{M}_{n}(\bold C)$ be positive semidefinite. Then  \begin{eqnarray}\label{e3} |\det(B+iC)|\le \det(B+C)\le 2^{n/2}|\det(B+iC)|.
\end{eqnarray} \end{lemma}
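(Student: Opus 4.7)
The plan is to reduce everything to a one-variable eigenvalue inequality by simultaneously handling $B$ and $C$ through congruence. First I would assume without loss of generality that $B$ is positive definite: if not, replace $B$ by $B+\varepsilon I$ ($\varepsilon>0$) and let $\varepsilon\to 0^+$, invoking continuity of the determinant on both sides of \eqref{e3}.

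Under this assumption, set $T=B^{-1/2}CB^{-1/2}$, which is positive semidefinite. Factoring $B^{1/2}$ out on each side gives
\[
B+iC=B^{1/2}(I+iT)B^{1/2},\qquad B+C=B^{1/2}(I+T)B^{1/2},
\]
so that $|\det(B+iC)|=\det(B)\,|\det(I+iT)|$ and $\det(B+C)=\det(B)\,\det(I+T)$. The inequalities in \eqref{e3} therefore reduce to
\[
|\det(I+iT)|\le \det(I+T)\le 2^{n/2}\,|\det(I+iT)|.
\]

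Diagonalizing $T$ with eigenvalues $t_1,\dots,t_n\ge 0$, this becomes
\[
\prod_{j=1}^n\sqrt{1+t_j^2}\;\le\;\prod_{j=1}^n(1+t_j)\;\le\;2^{n/2}\prod_{j=1}^n\sqrt{1+t_j^2},
\]
and both estimates follow termwise from the elementary chain $1+t^2\le(1+t)^2\le 2(1+t^2)$ valid for all $t\ge 0$; the left inequality is just $2t\ge 0$, the right one is $(1-t)^2\ge 0$.

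There is no real obstacle here: the only nonroutine step is the congruence trick that replaces the pair $(B,C)$ by $(I,T)$, and the perturbation $B\mapsto B+\varepsilon I$ needed when $B$ is only semidefinite. One could alternatively replace the congruence argument by symmetry in $B$ and $C$ (if $B$ is singular but $C$ is not, factor through $C$ instead, noting $|\det(B+iC)|=|\det(C-iB)|$ and $\det(B+C)=\det(C+B)$), but continuity is cleaner.
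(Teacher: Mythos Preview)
Your argument is correct and matches the paper's own proof essentially line for line: reduce to positive definite $B$ by continuity, congruence by $B^{1/2}$ to replace $(B,C)$ with $(I,T)$ where $T=B^{-1/2}CB^{-1/2}$, and then the termwise scalar inequality $|1+it|\le 1+t\le \sqrt{2}\,|1+it|$ on the eigenvalues. The only difference is cosmetic---you spell out the justification $1+t^2\le (1+t)^2\le 2(1+t^2)$ explicitly, which the paper leaves implicit.
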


\begin{proof}The first inequality follows from \cite[Theorem 2.2]{Zhan00} while the second one follows from \cite[Theorem 1.1]{BK09}. Here we provide a direct proof of (\ref{e3}) for the convenience of readers.  We may assume $B$ is positive definite, the general case is by a continuity argument. Let $\lambda_j$, $j=1,\ldots, n$, be the eigenvalues of $B^{-1/2}CB^{-1/2}$, where $B^{1/2}$ means the unique positive definite square root of $B$. Then \[|1+i\lambda_j|\le 1+\lambda_j\le \sqrt{2}|1+i\lambda_j|, ~~ j=1,\ldots, n.\] Also, we denote the identity matrix by $I$.

 Compute \begin{eqnarray*} |\det(B+iC)|&=&\det B\cdot|\det(I+iB^{-1/2}CB^{-1/2})|\\&=&\det B\cdot\prod_{j=1}^n|1+i\lambda_j|\\&\le& \det B\cdot\prod_{j=1}^n(1+\lambda_j)\\&=&\det B\cdot \det(I+B^{-1/2}CB^{-1/2})\\&=&\det(B+C).
\end{eqnarray*} This proves the first inequality. To show the other, compute \begin{eqnarray*} \det(B+C)&=&\det B\cdot \det(I+B^{-1/2}CB^{-1/2})\\&=&
\det B\cdot\prod_{j=1}^n(1+\lambda_j)\\&\le&\det B\cdot\prod_{j=1}^n\sqrt{2}|1+i\lambda_j|\\&=&2^{n/2}\det B\cdot|\det(I+iB^{-1/2}CB^{-1/2})|\\&=&2^{n/2}|\det(B+iC)|.
\end{eqnarray*}  \end{proof}

 \section{Main results}
\noindent
Theorem \ref{th2} follows from the next two theorems.

\begin{theorem}\label{th3} Let $A\in \mathbb{M}_{n}(\bold C)$ be accretive-dissipative and partitioned as in (\ref{e1}). Then  \begin{eqnarray}\label{lin-det1} |\det A|\le 2^{n/2} |\det A_{11}|\cdot|\det A_{22}|.\end{eqnarray}
\end{theorem}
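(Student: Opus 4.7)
The plan is to use Lemma \ref{lem4} as a two-way bridge between $|\det(B+iC)|$ and $\det(B+C)$, and then invoke the classical Fischer inequality (\ref{fischer det}) on the honestly positive definite matrix $B+C$. Since $B,C>0$ implies every principal block $B_{jj}$ and $C_{jj}$ is positive definite, both inequalities of Lemma \ref{lem4} are applicable, both to the full matrix $A$ and to each diagonal block $A_{jj}=B_{jj}+iC_{jj}$.

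Concretely, I would first apply the left-hand inequality of Lemma \ref{lem4} to $A=B+iC$ to obtain
\[
|\det A|\le \det(B+C).
\]
Next, observing that $B+C$ is positive definite (a sum of two positive definite matrices) and that its conformal partition has diagonal blocks $B_{11}+C_{11}$ and $B_{22}+C_{22}$, I would apply the Fischer determinantal inequality (\ref{fischer det}) to get
\[
\det(B+C)\le \det(B_{11}+C_{11})\cdot \det(B_{22}+C_{22}).
\]
Finally, applying the right-hand inequality of Lemma \ref{lem4} to each block $A_{jj}=B_{jj}+iC_{jj}$ (of sizes $k$ and $l$ respectively) yields
\[
\det(B_{11}+C_{11})\le 2^{k/2}|\det A_{11}|, \qquad \det(B_{22}+C_{22})\le 2^{l/2}|\det A_{22}|.
\]
Multiplying these together and using $k+l=n$ gives the desired bound $|\det A|\le 2^{n/2}|\det A_{11}|\cdot|\det A_{22}|$.

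There is no real technical obstacle here; the proof is essentially a three-step chain. The only thing worth flagging is that the two directions of Lemma \ref{lem4} must be used on different objects: the \emph{upper} bound $|\det(\cdot)|\le\det(\cdot)$ is used on the full $n\times n$ matrix $A$, while the \emph{lower} bound $\det(\cdot)\le 2^{(\cdot)/2}|\det(\cdot)|$ is used on the diagonal blocks, so that the extra factor $2^{n/2}$ comes purely from the block sizes and not from $n$ twice. This clean separation is what makes the constant sharpen to $2^{n/2}$ rather than, say, $2^n$ or $3^m$.
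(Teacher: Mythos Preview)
Your proof is correct and follows exactly the same three-step chain as the paper: apply the first inequality of Lemma~\ref{lem4} to $A=B+iC$, then the classical Fischer inequality to the positive definite matrix $B+C$, and finally the second inequality of Lemma~\ref{lem4} to each diagonal block $A_{jj}=B_{jj}+iC_{jj}$, combining the factors via $k+l=n$.
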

\begin{proof} Compute \begin{eqnarray*}|\det A|&=&|\det(B+iC)|\\&\le& \det(B+C)~~~~~~~~~~~~\hbox{(By Lemma \ref{lem4})}\\&\le& \det(B_{11}+C_{11})\cdot\det(B_{22}+C_{22})~~~~~~~~~~~~\hbox{(By FDI)}\\&\le& 2^{k/2}|\det(B_{11}+iC_{11})|\cdot 2^{l/2}|\det(B_{22}+iC_{22})|~~~~~~~~~~~~\hbox{(By Lemma \ref{lem4}))}\\&=& 2^{n/2}|\det A_{11}|\cdot|\det A_{22}|.
\end{eqnarray*}\end{proof}

\begin{theorem}\label{th4} Let $A\in \mathbb{M}_{n}(\bold C)$ be accretive-dissipative and partitioned as in (\ref{e1}). Then  \begin{eqnarray}\label{lin-det2} |\det A|\le 2^{3m/2} |\det A_{11}|\cdot|\det A_{22}|.\end{eqnarray}
\end{theorem}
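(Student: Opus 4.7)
The plan is to Schur-complement the larger block, bound the resulting smaller determinant by Lemma~\ref{lem4}, and close the gap to $|\det A_{11}|$ via a single Loewner-order inequality that encodes the information in Lemmas~\ref{lem2} and~\ref{lem3}. Assume without loss of generality that $k\le l$, so $m=k$. The Schur complement $T:=A/A_{22}=A_{11}-W$, with $W:=A_{12}A_{22}^{-1}A_{21}$, is $k\times k$ accretive-dissipative by Lemma~\ref{lem1}; write $T=B_T+iC_T$. Because $\det A=\det A_{22}\cdot\det T$, it suffices to prove $|\det T|\le 2^{3k/2}|\det A_{11}|$. Two applications of Lemma~\ref{lem4} give $|\det T|\le \det(B_T+C_T)$ and $\det(B_{11}+C_{11})\le 2^{k/2}|\det A_{11}|$, so the whole argument reduces to the Loewner-order estimate
\[
B_T+C_T\le 2\bigl(B_{11}+C_{11}\bigr),
\]
after which monotonicity of $\det$ on positive-definite matrices gives $\det(B_T+C_T)\le 2^k\det(B_{11}+C_{11})$, and chaining assembles exactly to the factor $2^{3k/2}$.

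The main obstacle is this Loewner inequality, equivalently $B_{11}+C_{11}+\re(W)+\operatorname{Im}(W)\ge 0$. To attack it I would write $A_{22}^{-1}=E_2-iF_2$ from Lemma~\ref{lem2} and expand $W$ through $A_{12}=B_{12}+iC_{12}$ and $A_{21}=B_{12}^{*}+iC_{12}^{*}$; this identifies $\re(W)+\operatorname{Im}(W)$ as a Hermitian form in $B_{12},C_{12}$ whose coefficients involve only $E_2\pm F_2$. Separately, Lemma~\ref{lem3} applied to $A_{22}$ gives $F_2\le \tfrac{1}{2}B_{22}^{-1}$ and $E_2\le \tfrac{1}{2}C_{22}^{-1}$, and combined with the Schur positivity $B_{11}\ge B_{12}B_{22}^{-1}B_{12}^{*}$ and $C_{11}\ge C_{12}C_{22}^{-1}C_{12}^{*}$ this upgrades to $B_{11}\ge 2B_{12}F_2B_{12}^{*}$ and $C_{11}\ge 2C_{12}E_2C_{12}^{*}$. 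Inserting these lower bounds for $B_{11}$ and $C_{11}$ into $B_{11}+C_{11}+\re(W)+\operatorname{Im}(W)$, the $E_2-F_2$ cross terms should cancel against the $2F_2$ and $2E_2$ contributions, and what survives should collapse into the complete square $(B_{12}+C_{12})(E_2+F_2)(B_{12}+C_{12})^{*}\ge 0$. That is the identity I expect to clinch the Loewner bound.

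Once the Loewner inequality is in hand, the chain $|\det A|=|\det A_{22}|\cdot|\det T|\le 2^{3k/2}|\det A_{11}||\det A_{22}|$ is immediate. The complementary case $l<k$ is handled symmetrically by Schur-complementing $A_{11}$ instead and running the same argument with the roles of $A_{11}$ and $A_{22}$ interchanged, which produces the analogous bound with $l$ in place of $k$; together they deliver $2^{3m/2}$.
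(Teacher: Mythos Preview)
Your proposal is correct and follows essentially the same route as the paper: Schur-complement out the larger diagonal block, apply Lemma~\ref{lem4} on both ends, and prove the Loewner bound $B_T+C_T\le 2(B_{11}+C_{11})$ using Lemma~\ref{lem2}, the estimates $E_2\le\tfrac12 C_{22}^{-1}$, $F_2\le\tfrac12 B_{22}^{-1}$ from Lemma~\ref{lem3}, and Schur positivity of $B$ and $C$. The only cosmetic difference is that the paper handles the cross terms via the two $\pm$ inequalities $(B_{12}\pm C_{12})^{*}G(B_{12}\pm C_{12})\ge 0$ for $G=E,F$ separately, whereas you recognize the leftover directly as the single square $(B_{12}+C_{12})(E_2+F_2)(B_{12}+C_{12})^{*}\ge 0$; the computations are equivalent.
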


\begin{proof}  We have, by Lemma \ref{lem2}, that
\begin{eqnarray*} A/A_{11}&=&A_{22}-A_{21}A_{11}^{-1}A_{12}\\&=&B_{22}+iC_{22}-(B_{12}^*+iC_{12}^*)(B_{11}+iC_{11})^{-1}(B_{12}+iC_{12})
\\&=&B_{22}+iC_{22}-(B_{12}^*+iC_{12}^*)(E_k-iF_k)(B_{12}+iC_{12})
\end{eqnarray*} with \begin{eqnarray*}\label{ef} E_k=(B_{11}+C_{11}B_{11}^{-1}C_{11})^{-1}, ~~~ F_k=(C_{11}+B_{11}C_{11}^{-1}B_{11})^{-1}.
\end{eqnarray*}

By Lemma \ref{lem3} and the operator reverse monotonicity of the inverse, we get \begin{eqnarray}\label{p1} E_k\le \frac{1}{2}C_{11}^{-1}, ~~~ F_k\le  \frac{1}{2}B_{11}^{-1}.
\end{eqnarray}

Setting $A/A_{11}=R+iS$ with $R=R^*$ and $S=S^*$. By Lemma \ref{lem1}, we know $R$ and $S$ are positive definite. A calculation shows \begin{eqnarray*}\label{r} R&=&B_{22}-B_{12}^*E_kB_{12}+C_{12}^*E_kC_{12}-B_{12}^*F_kC_{12}-C_{12}^*F_kB_{12};\\\label{s} S&=&C_{22}+B_{12}^*F_kB_{12}-C_{12}^*F_kC_{12}-C_{12}^*E_kB_{12}-B_{12}^*E_kC_{12}.
\end{eqnarray*}

It can be verified that \begin{eqnarray*}\label{p3}\pm(B_{12}^*F_kC_{12}+C_{12}^*F_kB_{12})&\le& B_{12}^*F_kB_{12}+C_{12}^*F_kC_{12};\\ \label{p4} \pm(C_{12}^*E_kB_{12}+B_{12}^*E_kC_{12})&\le& B_{12}^*E_kB_{12}+C_{12}^*E_kC_{12}.
\end{eqnarray*}
Thus, \begin{eqnarray}\label{p5} R+S\le B_{22}+2B_{12}^*F_kB_{12}+C_{22}+2C_{12}^*E_kC_{12}.
\end{eqnarray}

As $B, C$ are positive definite, we also have
\begin{eqnarray}\label{p6} B_{22}> B_{12}^*B_{11}^{-1}B_{12}, ~~\hbox{and}~~ C_{22}> C_{12}^*C_{11}^{-1}C_{12}.
\end{eqnarray}

Without loss of generality, we assume $m=l$. Compute
  \begin{eqnarray*} |\det(A/A_{11})|&=&|\det(R+iS)|\\&\le&\det(R+S)~~~~~~~~~~~\hbox{(by Lemma \ref{lem4})} \\&\le&\det(B_{22}+2B_{12}^*F_kB_{12}+C_{22}+2C_{12}^*E_kC_{12}) ~~~~~~~~~~~\hbox{(by (\ref{p5}))}
  \\&\le& \det(B_{22}+B_{12}^*B_{11}^{-1}B_{12}+C_{22}+C_{12}^*C_{11}^{-1}C_{12})~~~~~~~~~\hbox{(by (\ref{p1}))}\\&<&\det(2(B_{22}+C_{22}))~~~~~~~~~\hbox{(by (\ref{p6}))}\\&=& 2^m\det(B_{22}+C_{22})\\&\le&2^m\cdot 2^{m/2}|\det(B_{22}+iC_{22})|~~~~~~~~~~~\hbox{(by Lemma \ref{lem4})}\\&=&2^{3m/2}|\det A_{22}|.
\end{eqnarray*}
The proof is complete by noting $\det(A/A_{11})=\frac{\det A}{\det A_{11}}$.
 \end{proof}

It is natural to ask whether $a$ in (\ref{lin-det}) can be replaced by a smaller number?  There is evidence that the following could hold:

\begin{conjecture} Let $A\in \mathbb{M}_{n}(\bold C)$ be accretive-dissipative and partitioned as in (\ref{e1}). Then  \begin{eqnarray*}\label{lin-con} |\det A|\le 2^m|\det A_{11}|\cdot|\det A_{22}|.\end{eqnarray*} \end{conjecture}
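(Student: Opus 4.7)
The plan is to reduce to proving $|\det(A/A_{11})|\le 2^m|\det A_{22}|$ (equivalent to the conjecture via $\det A=\det A_{11}\cdot\det(A/A_{11})$) and to sharpen the argument of Theorem~\ref{th4}. In that argument, an excess factor $2^{m/2}$ appears because Lemma~\ref{lem4} is invoked twice: once to pass from $|\det(R+iS)|$ up to the real-matrix determinant $\det(R+S)$, and once more to convert $\det(B_{22}+C_{22})$ back into $|\det A_{22}|$. My strategy is to bypass the real-determinant detour and estimate the complex determinants directly.

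Writing $A/A_{11}=A_{22}-T$ with $T=(B_{12}^*+iC_{12}^*)(E_k-iF_k)(B_{12}+iC_{12})$, the conjecture reduces to $|\det(I-A_{22}^{-1}T)|\le 2^m$. Two complementary avenues present themselves. First, the base case $l=k=1$ admits a direct Cauchy--Schwarz calculation: $|A_{21}A_{12}|\le|B_{12}|^2+|C_{12}|^2<B_{11}B_{22}+C_{11}C_{22}\le|A_{11}|\cdot|A_{22}|$, so $|A_{21}A_{11}^{-1}A_{12}|\le|A_{22}|$ and the triangle inequality gives $|A/A_{11}|\le 2|A_{22}|$. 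I would extend this to $l=1$ with general $k$ by bounding the quadratic form $(B_{12}-iC_{12})^*(E_k-iF_k)(B_{12}+iC_{12})$ using Lemma~\ref{lem3} and the block estimates $B_{12}^*B_{11}^{-1}B_{12}<B_{22}$, $C_{12}^*C_{11}^{-1}C_{12}<C_{22}$. Second, for $l>1$ I would attempt a singular-value bound $\|A_{22}^{-1}T\|\le 1$, which localizes the eigenvalues of $A_{22}^{-1}T$ in the closed unit disc and forces $|\det(I-A_{22}^{-1}T)|\le 2^l$.

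The main obstacle is that neither avenue is routine. For $l=1$ with $k>1$, the cross terms $B_{12}^*C_{11}^{-1}B_{12}$ and $C_{12}^*B_{11}^{-1}C_{12}$ that arise when expanding $v^*(E_k-iF_k)u$ (with $v=B_{12}-iC_{12}$, $u=B_{12}+iC_{12}$) are not controlled by $B_{22}$ or $C_{22}$ alone, so the clean scalar argument does not transfer directly. For $l>1$, the operator norm of the product of two non-Hermitian complex matrices is difficult to bound tightly; the inequality $\|A_{22}^{-1}T\|\le 1$ may well fail even when the determinantal bound holds, in which case one must instead control $\prod_j|1-\mu_j|$ via a log-majorization argument comparing the singular values of $T$ to those of $A_{22}$. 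Either way, the essential difficulty is to find an inequality intrinsic to the accretive-dissipative structure that improves on the operator bound $R+S\le 2(B_{22}+C_{22})$, and this appears to require a genuinely new ingredient beyond Lemmas~\ref{lem1}--\ref{lem4}.
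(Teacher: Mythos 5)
The statement you have been asked to prove is presented in the paper as an open \emph{conjecture}: the author establishes only the weaker constants $2^{3m/2}$ and $2^{n/2}$ (Theorems \ref{th3} and \ref{th4}) and closes with an example showing that the factor $2^m$, if attainable, would be optimal. So there is no proof in the paper to compare against, and your proposal does not supply one either; it is a candid survey of strategies, each of which you yourself flag as incomplete. What you do get right is worth recording: your diagnosis that the excess $2^{m/2}$ in Theorem \ref{th4} comes from invoking Lemma \ref{lem4} twice (once in each direction) is accurate, and your $2\times 2$ base case is a correct complete argument --- $|A_{21}A_{12}|\le |B_{12}|^2+|C_{12}|^2 < B_{11}B_{22}+C_{11}C_{22}\le |A_{11}|\cdot|A_{22}|$ by Cauchy--Schwarz, hence $|A/A_{11}|\le 2|A_{22}|$ --- and it is consistent with the paper's closing example, which realizes the ratio $2$ asymptotically in exactly this $k=l=1$ situation.

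The gap is that neither proposed extension is carried out, and at the points where they stall there is no indication of how to proceed. For $l=1$, $k>1$, expanding $(B_{12}-iC_{12})^*(E_k-iF_k)(B_{12}+iC_{12})$ and applying the bounds $E_k\le \tfrac12 C_{11}^{-1}$, $F_k\le \tfrac12 B_{11}^{-1}$ produces the mixed quantities $B_{12}^*C_{11}^{-1}B_{12}$ and $C_{12}^*B_{11}^{-1}C_{12}$, which are controlled by neither $B_{22}$ nor $C_{22}$; this is precisely the obstruction you name, and naming it does not remove it. For general $l$, the operator-norm claim $\|A_{22}^{-1}T\|\le 1$ is asserted only as something you ``would attempt''; it is a far stronger statement than the determinantal bound (it localizes every eigenvalue of $A_{22}^{-1}T$ in the closed unit disc rather than bounding a product of $|1-\mu_j|$), and you give no argument for it and concede it may fail. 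In short, the essential missing ingredient --- an inequality exploiting the accretive-dissipative structure that improves on $R+S\le 2(B_{22}+C_{22})$ without paying the Lemma \ref{lem4} toll twice --- is exactly what the paper's author also lacks, which is why the statement remains a conjecture. Your write-up correctly locates the difficulty but does not resolve it.
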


We end the paper by an example showing that if the above conjecture is true, then the factor $2^m$ is optimal.
\begin{ex} Let $A=\begin{bmatrix}
(1+\epsilon)(1+i) &i-1   \\i-1 &(1+\epsilon)(1+i)
\end{bmatrix}=\begin{bmatrix}
1+\epsilon &-1   \\-1 &1+\epsilon
\end{bmatrix}+i\begin{bmatrix}
 1+\epsilon  &1  \\1& 1+\epsilon
\end{bmatrix}$ with $\epsilon>0$. Then $A$ is accretive-dissipative. As $\epsilon\to 0^+$, we have \[\frac{|\det A|}{|\det A_{11}|\cdot|\det A_{22}|}\to 2.\] \end{ex}

\section{Acknowledgement}
The author thanks the referee for his/her careful reading of the manuscript.

\vskip 10pt

\noindent
Minghua Lin

 Department of Applied Mathematics,

University of Waterloo,

 Waterloo, ON, N2L 3G1, Canada.

mlin87@ymail.com

\end{document}